\newtheorem{mydef}{Definition}
\newtheorem{mylem}{Lemma}
\newtheorem{mythm}{Theorem}
\newtheorem{myas}{Assumption}
\newtheorem{myrem}{Remark}
\newtheorem{mypro}{Problem}
\newcommand{\rfig}[1]{Fig.\,\ref{#1}} 
\newcommand{\req}[1]{\eqref{#1}} 
\newcommand{\rtab}[1]{Table\,\ref{#1}}
\newcommand{\rlem}[1]{Lemma\,\ref{#1}}
\newcommand{\rpro}[1]{Problem\,\ref{#1}}
\newcommand{\rrem}[1]{Remark\,\ref{#1}}
\newcommand{\rsec}[1]{Section\,\ref{#1}}
\newcommand{\rdef}[1]{Definition\,\ref{#1}}
\newcommand{\ras}[1]{Assumption\,\ref{#1}}
\newcommand{\qedwhite}{\hfill \ensuremath{\Box}}
\begin{document}
\title{Aperiodic Sampled-Data Control via Explicit Transmission Mapping: A Set Invariance Approach} 
\author{ Kazumune~Hashimoto,~\IEEEmembership{ Student Member,~IEEE},  Shuichi~Adachi,~\IEEEmembership{ Member,~IEEE,}  and~Dimos~V.~Dimarogonas,~\IEEEmembership{\large Member,~IEEE}
\thanks{Kazumune Hashimoto and Shuichi Adachi are with Department of Applied Physics and Physico-Informatics, Keio University, Japan. Hashimoto's work is supported by Grant-in-Aid for JSPS Research Fellow (DC2).}
\thanks{Dimos V. Dimarogonas is with the department of Electrical Engineering, KTH Royal Institute of Technology, Sweden. His work was supported by the Swedish Research Council (VR), the Swedish Foundation for Strategic Research (SSF), ERC Starting Grant BUCOPHSYS, and Knut och Alice Wallenberg foundation (KAW). 
}}
\maketitle
\begin{abstract}
Event-triggered and self-triggered control have been proposed in recent years as promising control strategies to reduce communication resources in Networked Control Systems (NCSs). Based on the notion of set-invariance theory, this note presents new self-triggered control strategies for linear discrete-time systems subject to input and state constraints. The proposed schemes not only achieve communication reduction for NCSs, but also ensure both asymptotic stability of the origin and constraint satisfactions. 
A numerical simulation example validates the effectiveness of the proposed approaches. 
\end{abstract}
\begin{IEEEkeywords}
Event-triggered and self-triggered control, Constrained control, Set-invariance theory. 
\end{IEEEkeywords}
\section{Introduction}
Efficient network utilization and energy-aware communication protocols between sensors, actuators and controllers have been recent challenges in the community of Networked Control Systems (NCSs). 
To tackle such challenges, event and self-triggered control schemes have been proposed as alternative approaches to the typical time-triggered controllers, see e.g., \cite{dimos2010a,heemels2011a,tabuada2010a}. In contrast to the time-triggered case where the control signals are executed periodically, event and self-triggered strategies trigger the executions based on the violation of prescribed control performances, such as Input-to-State Stability (ISS) \cite{dimos2010a} and ${\cal L}_\infty$ gain stability \cite{heemels2011a}. 

In particular, we are interested in designing self-triggered strategies for \textit{constrained} control systems, 
where certain constraints such as physical limitations and actuator saturations need to be explicitly taken into account. One of the most popular control schemes to deal with such constraints is Model Predictive Control (MPC) \cite{Mayne2000a}. In the MPC strategy, the current control action is determined by solving a constrained optimal control problem online, based on the knowledge of current state information and dynamics of the plant. Moreover, applications of the event and self-triggered control to MPC have been recently proposed to reduce the frequency of solving optimal control problems, see e.g.,  \cite{evmpc_linear6,evmpc_linear9,evmpc_linear10,hashimoto2015c,hashimoto2017a,hashimoto2017c}.

The main contribution of this note is to provide novel self-triggered strategies for constrained systems from an alternative perspective to the afore-cited papers, namely, a perspective from \textit{set-invariance theory} \cite{blanchini1999a}. Set invariance theory has been extensively studied for the past two decades  \cite{blanchini1994a,bitsoris1988a,gilbert1991a}, and it provides a fundamental tool to design controllers for constrained control systems. 
Two established concepts are those of a \textit{controlled invariant set} and $\lambda$-\textit{contractive set}. While a controlled invariant set implies that the state stays inside the set for all time, a $\lambda$-contractive set guarantees the more restrictive condition that the state is asymptotically stabilized to the origin. 

In this note, two different types of set-invariance based self-triggered strategies are presented. In the first approach, we formulate an optimal control problem such that the controller obtains stabilizing control inputs under \textit{multiple candidates} of transmission time intervals. 
Among the multiple solutions, the controller selects a suitable one such that both control performance and communication load are taken into account. 
Asymptotic stability of the origin is ensured by using Lyapunov techniques, where the Lyapunov function is induced by a  $\lambda$-contractive set obtained offline. Although the first approach guarantees asymptotic stability, it may lead to a high computation load as it requires to solve multiple optimization problems online. Therefore, we secondly propose an alternative strategy that aims to overcome the computational drawback of the first proposal. 
Similarly to the concept of explicit MPC \cite{bemporad2002b}, we provide an \textit{offline}, explicit mapping that sends the state information to the desired transmission time interval. As we will see in later sections, the state-space is decomposed into a finite number of subsets, to which appropriate transmission time intervals are assigned. 

The rest of the paper is organized as follows. In Section~II, the system description and some preliminaries of invariant set theory are given. In Section~III, we propose the first approach of the self-triggered strategy. In Section~IV, the second approach of the self-triggerd strategy is presented. In Section~V, a illustrative simulation example is given. We finally conclude in Section~VI. \\


\noindent
\textit{(Nomenclature)}: Let $\mathbb{R}$, $\mathbb{R}_+$, $\mathbb{N}$, $\mathbb{N}_+$ be the \textit{non-negative reals, positive reals, non-negative} and \textit{positive integers}, respectively. The \textit{interior} of the set ${\cal S} \subset \mathbb{R}^n$ is denoted as ${\rm int} \{ {\cal S} \}$. A set ${\cal S} \subset \mathbb{R}^n$ is called \textit{${\cal C}$-set} if it is compact, convex, and $0 \in {\rm int} \{{\cal S}\}$. 
For vectors $v_1, \ldots, v_N$, ${\rm co} \{ v_1, \ldots, v_N \}$ denotes their \textit{convex hull}. A set of vectors $\{v_1, \ldots, v_N \}$ whose convex hull gives a set ${\cal P}$ (i.e., ${\cal P} = {\rm co} \{ v_1, \ldots, v_N \}$), and each $v_n$, $n\in \{1, 2, \ldots, N\}$ is not contained in the convex hull of $v_1, \ldots, v_{n-1}, v_{n+1}, \ldots, v_{N}$ is called a set of \textit{vertices} of ${\cal P}$. Given a ${\cal C}$-set ${\cal S} \subset \mathbb{R}^n $, denote by $\partial {\cal S} \subset \mathbb{R}^n$ the boundary of ${\cal S}$. 
For a given $\lambda \in \mathbb{R}$ and a ${\cal C}$-set ${\cal S}\subset \mathbb{R}^n$, denote $\lambda {\cal S}$ as $\lambda {\cal S} = \{ \lambda x \in \mathbb{R}^n : x \in {\cal S}\}$. Given a set ${\cal S} \subset \mathbb{R}^n$, the function $\Psi_{\cal S} : \mathbb{R}^n \rightarrow \mathbb{R}_+$ with $\Psi_{{\cal S}}(x) = {\rm inf} \{\mu : x \in \mu {\cal S}, \mu \geq 0 \}$ is called a \textit{gauge function}. 
For given two sets ${\cal S}_1, {\cal S}_2 \subset \mathbb{R}^n$, define ${\cal S}_1 \backslash {\cal S}_2$
as ${\cal S}_1 \backslash {\cal S}_2 = \{ x\in \mathbb{R}^n : x \in {\cal S}_1, x \notin {\cal S}_2 \}$. 
\section{Problem formulation and some preliminaries}\label{strategy_sec}
In this section, the system description and some established results of set-invariance theory are provided.  
\subsection{System description and control strategy}\label{sys_desc_sec}
Consider a networked control system illustrated in 
\rfig{network}. We assume that the dynamics of the plant are given by 
\begin{equation}\label{sys}
x ({k+1}) = A x (k) + Bu (k)
\end{equation}
for $k\in \mathbb{N}$, where $x (k) \in \mathbb{R}^{n}$ is the state and $u (k) \in \mathbb{R}^m$ is the control variable.
The state and control input are assumed to be constrained as $x(k) \in {\cal X},\ u(k) \in {\cal U}$, $\forall k \in \mathbb{N}$, where ${\cal X} \subset \mathbb{R}^n, \ {\cal U} \subset \mathbb{R}^m$ are both polyhedral ${\cal C}$-sets described as 
\begin{equation}\label{constraint}
\begin{array}{lll}
{\cal X} = \{x\in \mathbb{R}^n: H_x x\leq h_x \}, \\
{\cal U} = \{u \in \mathbb{R}^m :H_u u\leq h_u \}, 
\end{array}
\end{equation}
where $H_x \in \mathbb{R}^{n_x \times n}$, $H_u \in \mathbb{R}^{n_u \times m}$ and $h_x$, $h_u$ are appropriately sized vectors having positive components. 
The control objective is to steer the state to the origin, i.e., $x(k) \rightarrow 0$ as $k\rightarrow \infty$. Let $k_m$, $m\in\mathbb{N}$ with $k_0 = 0$ be the transmission time instants when the plant transmits the state information $x(k_m)$ to the controller and updates the control input. 
In the self-triggered strategy, the transmission times are determined as
\begin{equation}\label{transmission_times}
k_{m +1} = k_m + \Gamma (x(k_m)), \ \ m \in \mathbb{N},
\end{equation}
where $\Gamma : {\cal X} \rightarrow \{ 1, 2, \ldots, j_{\max} \}$ denotes a mapping that sends the state information to the corresponding transmission time interval. Here, a maximal transmission time interval $j _{\max} \in \mathbb{N}_+$ is set apriori in order to formulate the self-triggered strategy. Due to the limited nature of communication bandwidth, we assume that only one control sample (not a sequence of control samples) is allowed to be transmitted at each transmission time. Namely, the control input is constant between two consecutive inter-transmission times, i.e., 
\begin{equation}\label{controller}
u (k) = \kappa (x(k_m)) \in {\cal U}, \ \ k \in [k_m , k_{m+1} ), 
\end{equation}
where $\kappa : {\cal X} \rightarrow {\cal U}$ denotes the state-feedback control law. 
The following assumptions are made throughout the paper (see e.g., \cite{blanchini1994a}):  
\begin{myas} 
The pair $(A, B)$ is controllable.
\end{myas}
\begin{myas}
{The matrix $B$ has full column rank.}
\end{myas}

\begin{figure}[tbp]
  \begin{center}
   \includegraphics[width=7cm]{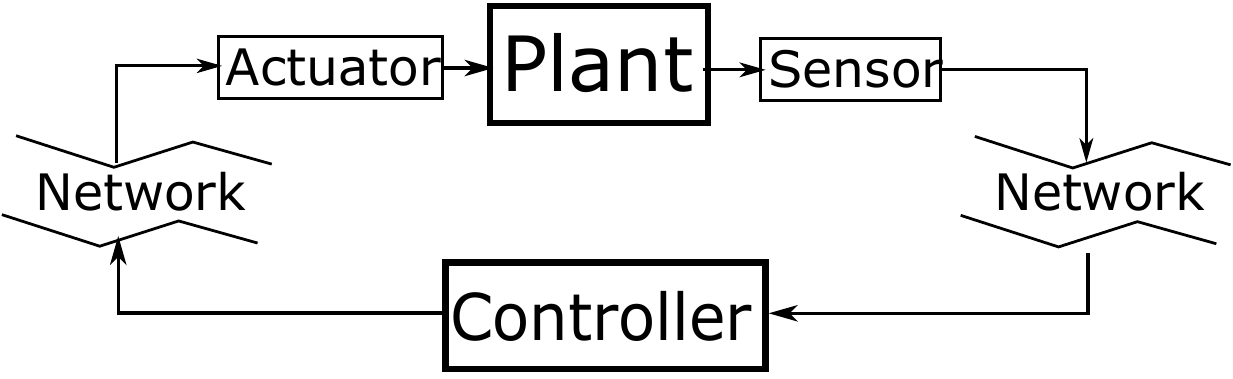}
   \caption{Networked Control System}
   \label{network}
  \end{center}
 \end{figure}

\subsection{Set-invariance theory}\label{set_invariance_sec}
In the following, we define the standard notions of \textit{controlled invariant set} and $\lambda$\textit{-contractive set} \cite{blanchini1994a}, which are  important concepts to characterize invariance and convergence properties for constrained control systems.
\begin{mydef}[Controlled invariant set]\label{lambda_contractive}
For a given ${\cal C}$-set ${\cal S} \subseteq {\cal X}$,  ${\cal S}$ is said to be a controlled invariant set in ${\cal X}$, if and only if there exists a control law $g(x)\in {\cal U}$ such that $Ax + Bg(x) \in {\cal S}$ for all $x\in {\cal S}$. 
\end{mydef}
\begin{mydef}[$\lambda$-contractive set]\label{lambda_contractive}
For a given ${\cal C}$-set ${\cal S} \subseteq {\cal X}$,  ${\cal S}$ is said to be a $\lambda$-contractive set in ${\cal X}$ for $\lambda\in [0, 1] $, if and only if there exists a control law $g(x)\in {\cal U}$ such that $A x + B g(x) \in \lambda {\cal S}$ for all $x\in {\cal S}$.
\end{mydef}
Roughly speaking, a set ${\cal S}$ is called $\lambda$-contractive set if all states in ${\cal S}$ can be driven into a tighter (or equivalent) region $\lambda {\cal S}$ by applying a one-step control input. From the definition, a controlled invariant set implies a $\lambda$-contractive set with $\lambda = 1$. 
We review several established results for obtaining a contractive set and the corresponding properties. For given $\lambda \in [0, 1)$ and ${\cal C}$-set ${\cal X}\subset \mathbb{R}^n$, there are several ways to efficiently construct a $\lambda$-contractive set in ${\cal X}$. 
For a given ${\cal C}$-set ${\cal D} \subset \mathbb{R}^n$, let ${\cal Q}_{\lambda}: \mathbb{R}^n \rightarrow \mathbb{R}^n$ be the mapping 
\begin{equation}\label{kstep_contrl}
{\cal Q}_{\lambda} ({\cal D}) = \{ x\in {\cal X} : \exists u \in {\cal U},\  A x + B u \in \lambda {\cal D} \}. 
\end{equation}
A simple algorithm to obtain a $\lambda$-contractive set in ${\cal X}$ is to compute $\Omega_{j} \subset \mathbb{R}^n$, $j \in \mathbb{N}$ as
\begin{equation}\label{iterative_procedure}
\Omega_{0} = {\cal X}, \quad \Omega_{j+1}  = {\cal Q} _{\lambda } (\Omega_{j} )  \cap {\cal X}, 
\end{equation}
and then it holds that the set ${\cal S} = \lim_{j\rightarrow \infty} \Omega_j $ is $\lambda$-contractive, see e.g., \cite{blanchini1994a}. 
If $\Omega_{j+1} =\Omega_j$ for some $j$, the $\lambda$-contractive set is obtained as ${\cal S} = \Omega_j $, which requires only a finite number of iterations. Although such condition does not hold in general, it is still shown, under Assumptions~1 and 2, that the algorithm converges in the sense that for every $\lambda < \bar{\lambda} <1$, there exists a finite $j \in \mathbb{N}_+$ such that the set $\Omega_j $ is $\bar{\lambda}$-contractive (see \textit{Theorem 3.2} in \cite{blanchini1994a}). 
Several other algorithms have been recently proposed, see e.g., \cite{hovd2014a,hovd2016a} and see also \cite{darup2017a} for a detailed convergence analysis. 
The following lemma illustrates the existence of a (non-quadratic) Lyapunov function in a given $\lambda$-contractive set:
\begin{mylem}\label{stability_lem}{\normalfont \cite{blanchini1994a}:}
Let ${\cal S} \subset {\cal X}$ be a $\lambda$-contractive ${\cal C}$-set with $\lambda \in [0, 1]$ and the associated gauge function $\Psi_{\cal S} : {\cal S} \rightarrow \mathbb{R}_+$. Then, there exists a control law $g : {\cal X} \rightarrow {\cal U}$ such that 
\begin{equation}\label{set_induced_lyapunov}
\Psi_{\cal S} ( Ax + B g (x)) \leq \lambda \Psi_{\cal S} ( x),
\end{equation}
for all $x \in {\cal S}$.
\end{mylem}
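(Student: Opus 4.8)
The plan is to exploit two structural facts: the positive homogeneity of the gauge function $\Psi_{\cal S}$ and the convexity of the input-constraint set ${\cal U}$. By definition, the $\lambda$-contractive property supplies a feedback $\bar{g}$ with $A x + B\bar{g}(x)\in \lambda {\cal S}$ for every $x\in{\cal S}$, which already yields $\Psi_{\cal S}(Ax+B\bar g(x))\le\lambda$. However, this is weaker than the claimed bound $\lambda\Psi_{\cal S}(x)$ whenever $x$ lies in the interior, where $\Psi_{\cal S}(x)<1$. I would therefore build the desired feedback $g$ from $\bar g$ by a scaling argument that upgrades the fixed contraction toward $\lambda{\cal S}$ into the homogeneous contraction required by \req{set_induced_lyapunov}.

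First I would record the elementary gauge identities for a ${\cal C}$-set: $\Psi_{\cal S}$ is positively homogeneous, $\Psi_{\cal S}(\mu y)=\mu\Psi_{\cal S}(y)$ for $\mu\ge 0$, and $\Psi_{\cal S}(y)\le\nu \Leftrightarrow y\in\nu{\cal S}$ for $\nu\ge 0$ (the latter using compactness, convexity and $0\in{\rm int}\{{\cal S}\}$). The case $x=0$ is immediate via $g(0)=0$, giving $\Psi_{\cal S}(0)=0$. For $x\in{\cal S}\setminus\{0\}$ I would set $\mu=\Psi_{\cal S}(x)\in(0,1]$, so that the normalized point satisfies $\Psi_{\cal S}(x/\mu)=1$, i.e. $x/\mu\in\partial{\cal S}\subseteq{\cal S}$. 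Then I define the feedback by the rescaling $g(x)=\mu\,\bar g(x/\mu)$, extending $g$ to $g(x)=0\in{\cal U}$ on ${\cal X}\setminus{\cal S}$, which is admissible since the lemma only requires the inequality on ${\cal S}$.

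With this definition the verification is short. Applying $\bar g$ at the boundary point gives $A(x/\mu)+B\bar g(x/\mu)\in\lambda{\cal S}$, and multiplying by $\mu$ together with the linearity of the dynamics yields
\[
A x+B g(x)=\mu\bigl(A(x/\mu)+B\bar g(x/\mu)\bigr)\in\mu\lambda{\cal S}.
\]
The gauge identity then gives $\Psi_{\cal S}(Ax+Bg(x))\le\mu\lambda=\lambda\Psi_{\cal S}(x)$, which is exactly the desired bound.

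The step requiring care is the feasibility of the rescaled input, namely $g(x)=\mu\,\bar g(x/\mu)\in{\cal U}$. Here I would invoke that ${\cal U}$ is a ${\cal C}$-set: since $\bar g(x/\mu)\in{\cal U}$, $0\in{\cal U}$ and $\mu\in[0,1]$, convexity gives $\mu\,\bar g(x/\mu)=\mu\,\bar g(x/\mu)+(1-\mu)\cdot 0\in{\cal U}$. This is the one place where the geometry of the input constraint is genuinely used; the remainder is homogeneity bookkeeping. Note also that no continuity of $g$ is asserted, so the pointwise construction above fully suffices.
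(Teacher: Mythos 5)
Your proof is correct and is essentially the argument the paper has in mind: the paper disposes of the lemma with ``\rlem{stability_lem} follows immediately from \rdef{lambda_contractive}'', and your rescaling $g(x)=\Psi_{\cal S}(x)\,\bar g\bigl(x/\Psi_{\cal S}(x)\bigr)$ is precisely the homogeneity step that makes this claim rigorous --- indeed the same scaling, with $0\in{\cal U}$ and convexity ensuring input feasibility, reappears verbatim in the paper's proof of \rlem{stabilize_controller} as $u=(\varepsilon_x/\rho_\ell)\sum_{n=1}^{N}\lambda_n u^*_{\ell,j}(n)$. Nothing is missing: you correctly isolate the one genuine issue (feasibility of the scaled input in ${\cal U}$), and the treatment of $x=0$ and the trivial extension of $g$ to ${\cal X}\setminus{\cal S}$ are both handled properly.
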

\rlem{stability_lem} follows immediately from \rdef{lambda_contractive}.
If $\lambda < 1$, \req{set_induced_lyapunov} implies the existence of a stabilizing controller in ${\cal S}$ in the sense that the output of the gauge function $\Psi_{\cal S} (\cdot)$ is guaranteed to decrease. The gauge function $\Psi_{\cal S} (\cdot)$ defined in ${\cal S}$ is known as \textit{set-induced Lyapunov function} in the literature; for a detailed discussion, see e.g., \cite{blanchini1994a}. 

\section{Self-triggered strategy}
As described in the introduction, we propose two different types of self-triggered controllers; in this section, the first approach is presented. 
\subsection{Designing a stabilizing  controller}\label{stabilize_control_sec}
For a given $\lambda \in [0, 1)$, let us first construct a $\lambda$-contractive set ${\cal S}$ in ${\cal X}$. 
Note that since ${\cal X}$ is a polyhedral ${\cal C}$-set, one can efficiently compute the $\lambda$-contractive set through polyhedral operations according to \req{iterative_procedure} \footnote{If the iterative procedure in \req{iterative_procedure} does not converge in finite time, one can stop the procedure to obtain a $\bar{\lambda}$-contractive set ($\lambda < \bar{\lambda} <1$) in a finite number of iterations. In such case, we use $\bar{\lambda}$ (instead of $\lambda$) as the parameter to design the self-triggered strategies provided throughout the paper. 
}. 
The obtained $\lambda$-contractive set ${\cal S}$ can be denoted as 
\begin{equation}\label{setS}
{\cal S} = {\rm co} \{v_1, v_2, \ldots, v_N \} \subseteq {\cal X}, 
\end{equation}
where $v_n, n\in \{1, 2, \ldots, N \}$ represent the vertices of ${\cal S}$, and $N$ represents the number of them. 
\begin{myas}\label{initial_cond}
The initial state is inside ${\cal S}$, i.e., $x(k_0) \in {\cal S}$. 
\end{myas}
Based on \ras{initial_cond}, we will design the self-triggered strategy such that the state remains in ${\cal S}$ for each transmission time instant. 
Suppose that at a certain transmission time $k_m$, $m \in \mathbb{N}$, the plant transmits the state information $x(k_m)$ to the controller. 
Based on $x(k_m)$, the controller needs to compute both a suitable controller to be applied and a transmission time interval, such that the state is stabilized to the origin. To this end, we first propose an approach to obtain stabilizing controllers under \textit{multiple candidates} of transmission time intervals. More specifically, we obtain different control actions under different transmission time intervals, and the controller selects a suitable one among them. 
To obtain the stabilizing controllers, we formulate the following optimal control problem for each $j\in \{1, \ldots, j_{\max} \}$: 

\begin{mypro}[Optimal Control Problem for $j$]\label{control_problem}
\normalfont
For given $x(k_m)$,
$j \in \{1, \ldots, j_{\max} \}$ and the $\lambda$-contractive set ${\cal S}$, find $u \in {\cal U}$ and $\varepsilon \in \mathbb{R}$ by solving the following problem:\\

\noindent
\begin{equation}\label{cost_epsilon}
\underset{u \in {\cal U}} {\min}\ \ \varepsilon, 
\end{equation}

\noindent
subject to $\varepsilon \in [0,  \lambda]$, and
\begin{enumerate}
\renewcommand{\labelenumi}{(C.\arabic{enumi})}
\item $A^{j'} x(k_m) + \sum^{j'} _{i=1} A^{i-1} B u \in {\cal X}$, $\forall j' \in \{ 1, \ldots, j \}$; 
\item $A^j x(k_m) + \sum^{j} _{i=1} A^{i-1} B u \in \varepsilon \varepsilon_{x} {\cal S}$;
\end{enumerate} 
where $\varepsilon_x = \Psi_{\cal S}(x(k_m))$.
\qedwhite
\end{mypro}
In (C.2), $A^j x(k_m) + \sum^{j} _{i=1} A^{i-1} B u$ represents a state by applying the control input $u \in {\cal U}$ \textit{constantly} for $j$ time steps. Moreover, from the definition of the gauge function $\Psi_{\cal S}(\cdot)$ we have $x(k_m) \in \varepsilon_{x} {\cal S}$. 
Thus, \rpro{control_problem} aims to find the smallest possible scaled set $\varepsilon \varepsilon_{x} {\cal S}$, such that the state enters $\varepsilon \varepsilon_{x} {\cal S}$ (from $\varepsilon_x {\cal S}$) by applying a $j$-step constant control input. This means that a stabilizing controller is found under the transmission time interval $j$. The constraint in (C.1) implies that the state must remain inside ${\cal X}$ while applying a $j$-step constant controller, which is imposed to guarantee the constraint satisfaction. 
Note that \rpro{control_problem} is a linear program, since all constraints imposed in (C.1), (C.2), as well as the cost in \req{cost_epsilon} are all linear. 

For given $x(k_m)$ and $j$, let $(u^* _j, \varepsilon^* _j )$ be a pair of optimal solutions obtained by solving \rpro{control_problem}. 
From (C.2), the state enters $\varepsilon^* _j \varepsilon_x {\cal S}$ if $u^* _j$ is applied constantly for $j$ steps, i.e., 
$A^j x(k_m) + \sum^{j} _{i=1} A^{i-1} B u^* _j \in \varepsilon^* _j \varepsilon _{x} {\cal S}$, which means that we have $\Psi_{\cal S} (A^j x(k_m) + \sum^{j} _{i=1} A^{i-1} B u^* _j ) \leq \varepsilon^* _j \Psi_{\cal S} (x(k_m))$, or 
\begin{equation}\label{lyapunov2}
\begin{aligned}
 \Psi_{\cal S} (A^j x(k_m) + & \sum^{j} _{i=1} A^{i-1} B u^* _j )  -\Psi_{\cal S} (x(k_m)) \\
& \leq - (1-\varepsilon^* _j) \Psi_{\cal S} (x(k_m))   
\end{aligned}
\end{equation}
with $0 \leq \varepsilon^* _j \leq \lambda < 1$.
Thus, $1-\varepsilon^* _j$ represents how much the output of the gauge function (as a Lyapunov function candidate) decreases by applying the optimal controller $u^* _j$ constantly for $j$ steps. That is, if $1-\varepsilon^* _j$ becomes larger (i.e., $\varepsilon^* _j$ becomes smaller), then the state will be closer to the origin and a better control performance is achieved. 

Now, consider solving \rpro{control_problem} for all $j \in \{1, \ldots, j_{\max} \}$, which provides different solutions under different transmission time intervals. 
In the following, let ${\cal J} (x(k_m))$ be the set of indices (transmission time intervals) where \rpro{control_problem} provides a feasible solution. 
That is, 
\begin{equation}\label{calJ}
\begin{aligned}
{\cal J} (x(k_m)) = \{j \in & \{1,   \ldots, \  j_{\max}\} : \\
                       & {\rm \rpro{control_problem}\ is\ feasible\ for\ } j   \}. 
\end{aligned}
\end{equation}

\begin{myrem}[On the non-emptiness of ${\cal J}(x(k_m))$] \label{feasibility_problem}
\normalfont 
If $x(k_m) \in {\cal S}$, there always exists $u \in {\cal U}$ such that $A x(k_m) + B u \in \lambda \varepsilon_x {\cal S} \subseteq {\cal X}$ holds from the properties of the $\lambda$-contractive set (see \cite{blanchini1994a}). Thus, \rpro{control_problem} has a solution with $j=1$ for any $x(k_m) \in {\cal S}$, and hence, ${\cal J} (x(k_m))$ is non-empty for any $x(k_m) \in {\cal S}$. \qedwhite
\end{myrem}

\subsection{An overall algorithm}\label{transmission_sec}
In this subsection an overall self-triggered algorithm is  presented. After solving \rpro{control_problem} for all $j \in \{1,\ldots, j_{\max} \}$, which provides the optimal (feasible) sets of solutions $(u^* _j, \varepsilon^* _j )$ for all $j \in {\cal J} (x(k_m))$, the controller selects a suitable transmission time interval among them. 
The transmission time interval is selected such that both control performance and the communication load are taken into account. A more specific way to achieve this is given in the following overall strategy: \\ 

\textit{{Algorithm~1}} (Self-triggered strategy): 
For any transmission time $k_m$, $m \in \mathbb{N}$, do the following: 
\begin{enumerate}
\item The plant transmits the current state information $x(k_m)$ to the controller. 
\item Based on $x(k_m)$, the controller solves Problem~1 for all $j \in \{1, \ldots, j_{\max} \}$, which provides the optimal (feasible) solutions $(\varepsilon^* _j, u^* _j)$ for all $j \in {\cal J} (x(k_m))$. 
\item The controller picks up an optimal index $j_m \in {\cal J} (x(k_m))$ by solving the following problem: 
\begin{equation}\label{cost_func}
j_m = \underset{j \in {\cal J}(x(k_m))}{\rm argmax}\  w_1 (1-\varepsilon^* _j)/j + w_2 j , 
\end{equation}
where $w_1, w_2 \geq 0$ represent given tuning weight parameters. Then, set $k_{m+1} = k_m +  j_m$ and $u^* (k_m) = u^* _{j_m}$,  and the controller transmits $u^* (k_m)$ and $k_{m+1}$ to the plant. 
\item The plant applies $u^* (k_m)$ for all $k \in [k_m, k_{m+1})$. Set $m \leftarrow m+1$, and then go back to step (1). \qedwhite 
\end{enumerate}

As shown in Algorithm~1, for each $k_m$ we select the transmission time interval $j_m$ according to \req{cost_func}. As described in the previous subsection, the term $(1-\varepsilon^* _j)$ represents how much the output of the gauge function decreases by applying the optimal controller $u^* _j$ constantly for $j$ steps. Thus, the first term $(1-\varepsilon^* _j)/j$ represents a \textit{reward} due to the rate of decrease of the gauge function per {one time step}, and a better control performance can be achieved when this term becomes larger. 
On the other hand, from a self-triggered control viewpoint, less control updates will be obtained when control inputs can be applied constantly longer (i.e., when $j$ becomes larger). Thus, the second part in \req{cost_func} involves $j$ to represent some {reward} for alleviating the communication load; as $j$ gets larger, then we obtain less communication load and a larger reward is obtained. 

Some remarks are in order regarding Algorithm~1: 
\begin{myrem}[Relation to move-blocking MPC]
\normalfont
The proposed algorithm is related to \textit{move-blocking MPC}\cite{move_blocking}, in the sense that the optimal control inputs are restricted to be constant for some time period. Note that move-blocking MPC aims at reducing the computational complexity by decreasing the degrees of freedom of the optimal control problem\cite{move_blocking}; the proposed approach, on the other hand, aims at reducing the \textit{communication load} through the move-blocking technique, and the reduction of computation load is not a primary objective here.  \qedwhite 
\end{myrem}
\begin{myrem}[On the selection of $j_{\max}$]
\normalfont
In Algorithm~1, the controller solves \rpro{control_problem} for all $j \in \{ 1, \ldots, j_{\max} \}$ for each transmission time instant. While we can potentially achieve longer transmission intervals if $j_{\max}$ is selected larger, the computation load of solving \rpro{control_problem} becomes heavier. 
Thus, in practical implementation, the user may carefully select a suitable $j_{\max}$ by considering the trade-off between the communication load and the calculation time of solving the optimal control problem. 
\qedwhite 
\end{myrem}

\begin{mythm}[Stability]\label{stability1}
Suppose that Assumption~3 holds, and Algorithm~1 is implemented. Then, it holds that $x (k) \rightarrow 0$ as $k \rightarrow \infty$. \qedwhite
\end{mythm}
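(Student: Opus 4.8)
The plan is to turn the set-induced Lyapunov function $\Psi_{\cal S}$ of \rlem{stability_lem} into a strict contraction evaluated along the transmission instants $k_m$, and then to upgrade convergence at the $k_m$ to convergence of the entire sampled trajectory. First I would settle well-posedness and recursive feasibility. By \rrem{feasibility_problem}, whenever $x(k_m)\in{\cal S}$ the index $j=1$ is feasible, so ${\cal J}(x(k_m))\neq\emptyset$ and the selection \req{cost_func} is well defined. Constraint (C.2) places the successor state in $\varepsilon^*_{j_m}\varepsilon_x{\cal S}$ with $\varepsilon^*_{j_m}\le\lambda$ and $\varepsilon_x=\Psi_{\cal S}(x(k_m))\le 1$; since $0\in{\cal S}$ and ${\cal S}$ is convex, $\varepsilon^*_{j_m}\varepsilon_x{\cal S}\subseteq{\cal S}$, hence $x(k_{m+1})\in{\cal S}$. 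With \ras{initial_cond}, induction yields $x(k_m)\in{\cal S}$ for every $m$, so the scheme never stalls.

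The core step is the Lyapunov decrease. Because the input is held constant on $[k_m,k_{m+1})$ and $k_{m+1}-k_m=j_m$, the closed loop gives $x(k_{m+1})=A^{j_m}x(k_m)+\sum_{i=1}^{j_m}A^{i-1}Bu^*_{j_m}$, which is precisely the vector constrained in (C.2). Invoking positive homogeneity of $\Psi_{\cal S}$ together with $x(k_m)\in\varepsilon_x{\cal S}$, constraint (C.2) gives $\Psi_{\cal S}(x(k_{m+1}))\le\varepsilon^*_{j_m}\Psi_{\cal S}(x(k_m))\le\lambda\Psi_{\cal S}(x(k_m))$, as already recorded in \req{lyapunov2}. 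Iterating, $\Psi_{\cal S}(x(k_m))\le\lambda^{m}\Psi_{\cal S}(x(k_0))\to0$. Because ${\cal S}$ is a ${\cal C}$-set, $\Psi_{\cal S}$ is positive definite and equivalent to any norm (constants $c_1,c_2>0$ with $c_1\|x\|\le\Psi_{\cal S}(x)\le c_2\|x\|$), so $x(k_m)\to0$; moreover $k_m\to\infty$ since $1\le j_m\le j_{\max}$.

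It remains to pass from the sampling instants to all $k$. For $k\in[k_m,k_{m+1})$ one has $x(k)=A^{k-k_m}x(k_m)+(\sum_{i=1}^{k-k_m}A^{i-1}B)u^*_{j_m}$ with $0\le k-k_m<j_m\le j_{\max}$, so it suffices to show the applied input $u^*_{j_m}$ tends to $0$. The available handle is $(\sum_{i=1}^{j_m}A^{i-1}B)u^*_{j_m}=x(k_{m+1})-A^{j_m}x(k_m)\to0$ together with $u^*_{j_m}\in{\cal U}$ (compact) and $j_m\in\{1,\dots,j_{\max}\}$ (finite); a subsequence with a fixed interval length then produces a limit $u^\star$ annihilated by $\sum_{i=1}^{j_m}A^{i-1}B$. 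When $j_m=1$ there are no inter-sample instants and $Bu^*_{j_m}=x(k_{m+1})-Ax(k_m)\to0$ with $B$ of full column rank (Assumption~2) already forces $u^*_{j_m}\to0$; the substantive case is $j_m\ge2$, treated next.

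I expect the main obstacle to be exactly this inter-sample step for intervals $j_m\ge2$. The matrix $\sum_{i=1}^{j_m}A^{i-1}B=(\sum_{i=0}^{j_m-1}A^{i})B$ can be rank deficient (e.g.\ when $\sum_{i=0}^{j_m-1}A^{i}$ is singular), so the limiting input $u^\star$ need not vanish, and since \req{cost_epsilon} penalizes only $\varepsilon$, a vertex optimizer could sit on $\partial{\cal U}$ along a kernel direction even as $x(k_m)\to0$, keeping some $x(k_m+j')$ away from the origin. The clean remedy exploits homogeneity: for $\Psi_{\cal S}(x(k_m))$ small the constraints (C.1) and $u\in{\cal U}$ are inactive at optimality, the problem reduces to its scale-invariant core $A^{j_m}x(k_m)+(\sum_{i=1}^{j_m}A^{i-1}B)u\in\varepsilon\varepsilon_x{\cal S}$, and the minimum-norm optimizer therefore scales linearly, giving $\|u^*_{j_m}\|\le c\,\Psi_{\cal S}(x(k_m))$. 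Selecting this minimizer (equivalently, adding an $\|u\|$ tie-breaker to \req{cost_epsilon}) yields $\sup_{0\le j'<j_m}\|x(k_m+j')\|\le C\,\Psi_{\cal S}(x(k_m))\to0$, and therefore $x(k)\to0$ for all $k$.
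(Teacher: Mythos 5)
Your recursive-feasibility and contraction arguments coincide exactly with the paper's proof: the paper likewise shows by induction via (C.2) that $x(k_m)\in{\cal S}$ and hence ${\cal J}(x(k_m))\neq\emptyset$ for all $m$, and then concludes from $\Psi_{\cal S}(x(k_{m+1}))\le\varepsilon^*_{j_m}\Psi_{\cal S}(x(k_m))\le\lambda\Psi_{\cal S}(x(k_m))$, $\lambda<1$, that the trajectory is asymptotically stabilized --- and it stops there. Where you depart from the paper is your third step: you observe that the theorem asserts $x(k)\to 0$ for \emph{all} $k$, not merely along the transmission instants, and you attempt to supply the inter-sample argument that the paper omits. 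You are right that this is a genuine lacuna in the paper's proof as written: (C.1) only confines the inter-sample states to ${\cal X}$ (boundedness, not convergence), and since \rpro{control_problem} minimizes $\varepsilon$ alone, the selected $u^*_{j_m}$ need not vanish as $x(k_m)\to 0$. Your diagnosis of when this bites is also accurate: from (C.2), $\bigl(\sum_{i=0}^{j_m-1}A^{i}\bigr)Bu^*_{j_m}=x(k_{m+1})-A^{j_m}x(k_m)\to 0$, and with $B$ of full column rank (Assumption~2) the matrix $\bigl(\sum_{i=0}^{j-1}A^{i}\bigr)B$ loses column rank exactly when $A$ has an eigenvalue at a nontrivial $j$-th root of unity; only in that unit-circle case can a vertex optimizer carry a persistent kernel component so that some intermediate state $x(k_m+j')=A^{j'}x(k_m)+\bigl(\sum_{i=0}^{j'-1}A^{i}\bigr)Bu^*_{j_m}$ fails to converge. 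In the generic case your compactness-plus-full-rank argument already closes the theorem completely, which is more than the paper proves.

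Two caveats on your remedy for the degenerate case. First, the minimum-norm tie-breaker changes the selection rule of Algorithm~1, so strictly speaking you prove the theorem for an amended algorithm rather than for the algorithm as stated; if one insists on Algorithm~1 verbatim, the honest conclusion is what the paper's own proof actually delivers --- $\Psi_{\cal S}(x(k_m))\le\lambda^{m}\Psi_{\cal S}(x(k_0))\to 0$ along transmissions together with constraint satisfaction ($x(k)\in{\cal X}$) in between. Second, your scaling argument for the min-norm optimizer is sound in outline (homogeneity gives that for $x=\alpha d$, $d\in\partial{\cal S}$ and $\alpha$ small, the constraints $u\in{\cal U}$ and (C.1) are inactive and the optimal value equals that of the scale-invariant core problem, whose optimizer scales linearly in $\alpha$), but the uniformity of the constants over directions $d\in\partial{\cal S}$ and over $j\in\{1,\ldots,j_{\max}\}$ rests on parametric-LP continuity and compactness of $\partial{\cal S}$, which you should spell out before claiming $\|u^*_{j_m}\|\le c\,\Psi_{\cal S}(x(k_m))$. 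With those two points made explicit, your proof is correct and strictly stronger than the paper's.
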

\begin{proof}
We first show that \req{cost_func} is always feasible (i.e., we can always pick up a transmission time interval according to \req{cost_func}), by proving that ${\cal J} (x(k_m))$ is non-empty for all $m\in \mathbb{N}$. 
By \ras{initial_cond} we obtain $x(k_0) \in {\cal S}$ and thus ${\cal J} (x(k_0))$ is non-empty (see \rrem{feasibility_problem}). Since $j_0$ is obtained from \req{cost_func}, we have $j_0 \in {\cal J} (x(k_0))$ which means that \rpro{control_problem} has a feasible solution for $j = j_0$. Thus, from the constraint (C.2) in \rpro{control_problem}, we obtain $x (k_1) = A^{j_0} x(k_0) + \sum^{j_0} _{i=1} A^{i-1} B u^* (k_0) \in {\cal S}$, which means that ${\cal J} (x(k_1))$ is non-empty. By recursively following this argument, it is shown that $x (k_m) \in {\cal S}$ for all $m\in \mathbb{N}$, which follows that ${\cal J} (x(k_m))$ is non-empty for all $m\in \mathbb{N}$. 

Now, it is shown that $x (k) \rightarrow 0$ as $k \rightarrow \infty$. Since $j_m \in {\cal J} (x_m)$, $\forall m\in \mathbb{N}$, it holds from (C.2) in \rpro{control_problem} that:  
\begin{equation}\label{gauge_decrease}
\begin{aligned}
\Psi_{\cal S} (x(k_{m+1}) ) & \leq \varepsilon^* _{j_m} \Psi_{\cal S} (x(k_m))  \\
& \leq \lambda \ \Psi_{\cal S} (x(k_m)).
\end{aligned}
\end{equation}
with $\lambda < 1$. Therefore, by regarding $\Psi_{\cal S} (\cdot)$ as a set-induced Lyapunov function candidate (see \rlem{stability_lem}), the Lyapunov function is strictly decreasing and the state trajectory is asymptotically stabilized to the origin. This completes the proof.
\end{proof}

\begin{myrem}[On achieving exponential stability]
\normalfont 
Although Theorem~1 states only asymptotic stability of the origin, exponential stability can be achieved by imposing an additional constraint when evaluating the reward function in \req{cost_func}. Specifically, let $j_\ell$ be chosen according to \req{cost_func}, subject to the constraint $\varepsilon^* _{j} \leq \lambda^{j}$. 
Indeed, imposing this constraint yields that $\Psi_{\cal S} (x(k_{m+1}) ) \leq \lambda^{j_{m}} \Psi_{\cal S} (x(k_{m}))$, $\forall m \in \mathbb{N}$ (instead of $\Psi_{\cal S} (x(k_{m+1}) ) \leq \lambda \Psi_{\cal S} (x(k_{m}))$ as in \req{gauge_decrease}). Thus, we obtain 
\begin{equation*}\label{exponential_stability}
\Psi_{\cal S} (x(k_{m}) ) \leq \lambda^{j_{m-1}} \Psi_{\cal S} (x(k_{m-1})) \leq \cdots \leq \lambda^{k_m} \Psi_{\cal S} (x(0)), 
\end{equation*}
which implies that exponential stability is guaranteed (see e.g., \cite{blanchini1994a}). 

\qedwhite 
\end{myrem}

\section{Self-triggered control via explicit mapping $\Gamma$}
In the previous section, the self-triggered strategy has been presented by solving \rpro{control_problem} for all $j \in \{ 1, \ldots, j_{\max} \}$. However, solving \rpro{control_problem} for all candidates of transmission time intervals may lead to a high computation load, which may induce computational delays to transmit control samples to the plant. A more preferred approach may be that the transmission mapping $\Gamma: {\cal X} \rightarrow \{1, \ldots, j_{\max}\}$ given in \req{transmission_times}, which sends the state to the desired transmission time interval, is obtained \textit{offline}. That is, with the mapping $\Gamma$ provided explicitly offline, the next transmission time can be directly determined from the (current) state information, without having to solve Problem 1 for all $j \in \{ 1, \ldots, j_{\max} \}$. The approach presented in this section is related to explicit MPC framework \cite{bemporad2002b}, in which an offline characterization of the control strategy (but here, the transmission time intervals) is given via state-space decomposition. A more specific formulation is given below. 
\subsection{Construction of $\Gamma$ via state-space decomposition}
In order to create the explicit mapping of $\Gamma$, we first \textit{decompose} the contractive set ${\cal S}$ into a finite number of disjoint subsets ${\cal S}_1, \ldots, {\cal S}_L \subset {\cal S}$, i.e., 
\begin{equation}\label{decomposition_subsets}
{\cal S} = \bigcup^{L} _{\ell =1} {\cal S}_\ell, 
\end{equation}
where it holds that ${\cal S}_\ell \cap {\cal S}_{\ell'} = \emptyset$ for all $(\ell, \ell') \in \{1, \ldots, L\} \times \{1, \ldots, L\}$ $(\ell \neq \ell')$. Based on the decomposition, we will then assign a specific transmission time interval to each ${\cal S}_\ell$, $\ell \in \{1, \ldots, L \}$, so that the controller directly determines the next transmission time. Intuitively, if the state is located far from the origin we would like to assign a short transmission time interval to achieve stability of the origin (or achieve good control performance). In particular, in the case of un-stable systems, applying a constant control signal may lead to a divergence of states, especially if the state is far from the origin. On the other hand, if the state is close to the origin, a small control effort may be sufficient to stabilize the system. That is, assigning a long transmission time interval may be allowable to achieve both stability and communication reduction. 

Motivated by the above intuition, we decompose the contractive set as follows. First, for a given $L \in \mathbb{N}_+$, define a set of scalars $\rho_1, \ldots \rho_L \in (0, 1]$, with 
\begin{equation}
0 < \rho_1 < \rho_2 < \cdots < \rho_{L-1} < \rho_{L} = 1.
\end{equation}
Then, consider the following sequence of $L$ sets ${\cal S}_1, {\cal S}_2, \ldots , {\cal S}_{L} \subset {\cal S}$: 
\begin{equation}\label{set_sequence}
\begin{aligned}
{\cal S}_1    &= \rho_1 {\cal S}, \\
{\cal S}_\ell & = \rho_\ell  {\cal S} \backslash {\cal S}_{\ell-1},\ \ \forall \ell \in \{2, \ldots, L\}. 
\end{aligned}
\end{equation}

\begin{figure}[tbp]
  \begin{center}
   \includegraphics[width=5cm]{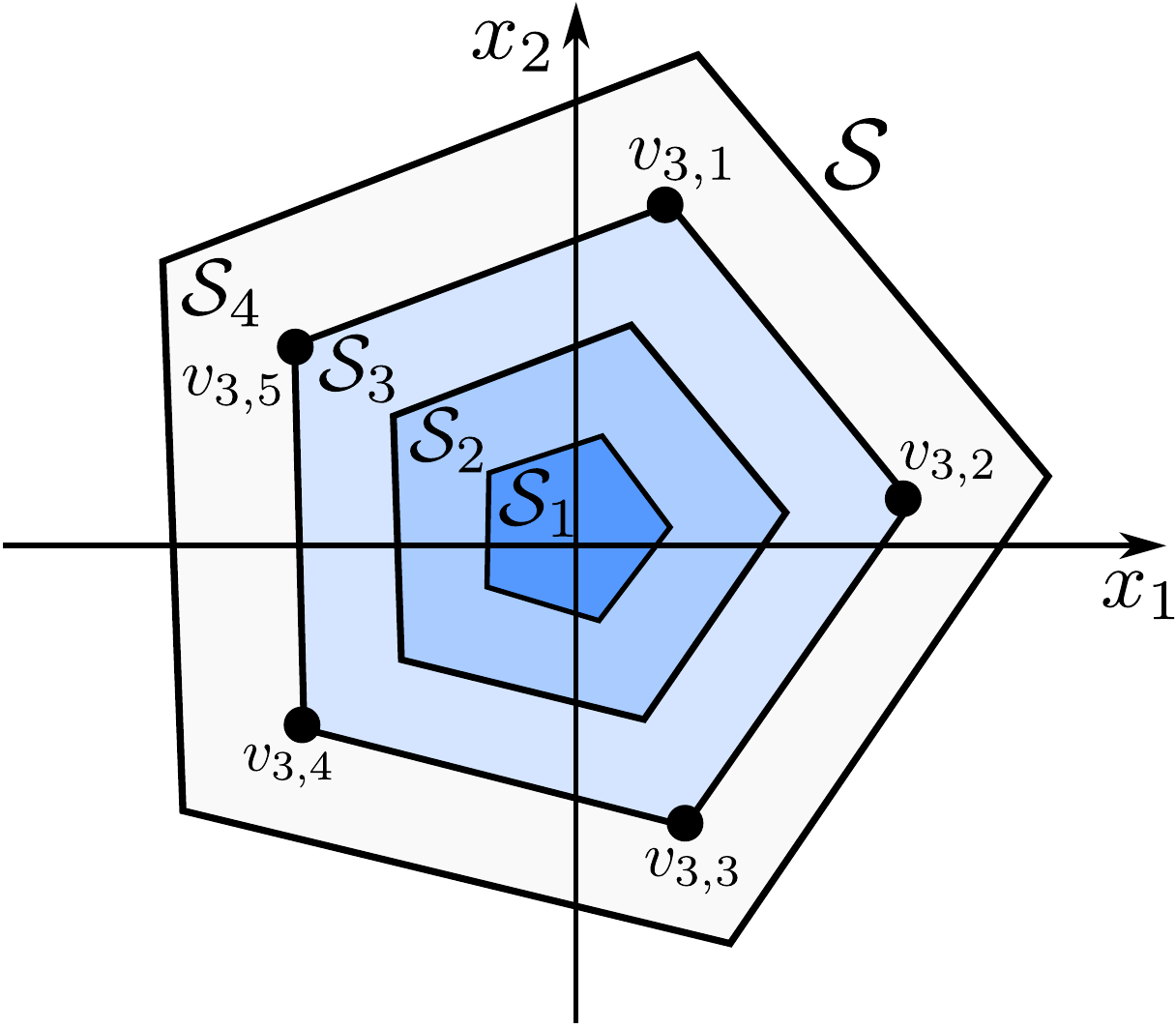}
   \caption{Illustration of the decomposed subsets ${\cal S}_\ell$, $\ell \in \{1, \ldots, L\}$ (for the case $L=4$) according to \req{set_sequence}. In the figure, the pentagon represents the contractive set ${\cal S}$ and the subsets ${\cal S}_1, \ldots, {\cal S}_4$ are illustrated with different shades of blue. } 
   \label{decomposition}
  \end{center}
 \end{figure}

The illustration of the sequence of sets is depicted in \rfig{decomposition}. It can be easily shown that the set sequence defined in \req{set_sequence} yields a {decomposition} as in \req{decomposition_subsets}, which satisfies the disjoint property as described above. 
Now, ${\cal S}$ has been decomposed into a finite number of $L$ sets ${\cal S}_1, \ldots, {\cal S}_L$, to which we next assign suitable transmission time intervals. To this end, we formulate the following optimal control problem for each pair $(\ell, j) \in \{1, \ldots, L \} \times \{1, \ldots, j_{\max}\}$: 
\begin{mypro}[Optimal Control Problem for $(\ell, j)$] \label{control_problem2}
\normalfont  
For a given pair $(\ell, j) \in \{1, \ldots, L\}\times \{1, \ldots, j_{\max} \}$, find $u_{1}, \ldots, u_{N} \in {\cal U}$ and $\varepsilon \in \mathbb{R}$ by solving the following problem: \\

\noindent
\begin{equation}
\underset{u_1, \ldots, u_{N} \in {\cal U}} {\min}\ \ \varepsilon, 
\end{equation}

\noindent
subject to $\varepsilon \in [0, \lambda]$, and 
\begin{enumerate}
\renewcommand{\labelenumi}{(C.\arabic{enumi})}
\item For all $n \in \{ 1, \ldots, N \}$, $j' \in \{ 1, \ldots, j \}$, 
\begin{eqnarray}\label{state_constraint}
A^{j'} v_{\ell, n} + \sum^{j'} _{i=1} A^{i-1} B u_n \in {\cal X}, 
\end{eqnarray}
where $v_{\ell, n} = \rho_\ell v_{n}$, $n \in \{ 1, \ldots, N \}$, $j' \in \{ 1, \ldots, j \}$. 
\item For all $n \in \{ 1, \ldots, N \}$, 
\begin{equation}\label{stability_constraint}
\begin{aligned}
& A^j v_{\ell, n} + \sum^{j} _{i=1} A^{i-1} B u_n \in \varepsilon \rho_\ell {\cal S}. 
\end{aligned}
\end{equation}
\end{enumerate}
\qedwhite
\end{mypro}
Recall that $v_n$, $n \in \{1, \ldots, N\}$ represent the vertices of ${\cal S}$ (see \req{setS}). Thus, $v_{\ell, n} = \rho_\ell v_n$, $n \in \{1, \ldots, N\}$ represent the extreme points on the outer boundary of ${\cal S}_\ell$ (see the illustration in \rfig{decomposition}).  \rpro{control_problem2} for $(\ell, j)$ aims at finding a set of controllers $u_1, \ldots, u_{N}$ and a scalar $\varepsilon$, such that all the extreme points $v_{\ell, n}$, $n \in \{1, \ldots, N\}$ can be driven into $\varepsilon \rho_\ell {\cal S}$ under the $j$-step constant control inputs. Note that \rpro{control_problem2} is solved {offline} for all $\ell \in \{1,\ldots, L\}$, $j\in \{1, \ldots, j_{\max} \}$, since it can be solved by evaluating the extreme points $v_{\ell, n}$, $n \in \{1, \ldots, N\}$ that are given offline. 

Now, suppose that \rpro{control_problem2} has a solution for $(\ell, j)$, which provides optimal control inputs and a scalar denoted as $u^* _{\ell, j}(1), u^* _{\ell, j}(2), \ldots, u^* _{\ell, j} (N) \in {\cal U}$, $\varepsilon^* _{\ell, j} \in [0, \lambda]$, respectively. The following lemma describes that the feasibility of \rpro{control_problem2} for $(\ell, j)$ implies the existence of a stabilizing controller for all $x \in {\cal S}_{\ell}$: 

\begin{mylem}\label{stabilize_controller}
\normalfont 
Suppose that \rpro{control_problem2} finds a solution for $(\ell, j)$. 
Then, for every $x \in {\cal S}_\ell$, there exists $u \in {\cal U}$ such that: (i)~$A^{j'} x + \sum^{j'} _{i=1} A^{i-1} B u \in {\cal X}$ for all $j' \in \{ 1, \ldots, j \}$; (ii)~$A^j x + \sum^{j} _{i=1} A^{i-1} B u \in \varepsilon^* _{\ell, j} \varepsilon_x \ {\cal S}$ with $\varepsilon_x = \Psi_{\cal S} (x)$. 
\qedwhite
\end{mylem}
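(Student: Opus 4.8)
The plan is to exploit convexity together with the positive homogeneity of the gauge function, so as to reduce the claim for an arbitrary $x \in {\cal S}_\ell$ to the finitely many extreme-point conditions (C.1)--(C.2) that \rpro{control_problem2} already certifies at the points $v_{\ell,n} = \rho_\ell v_n$. First I would express $x$ through its gauge value. Set $\varepsilon_x = \Psi_{\cal S}(x)$; the degenerate case $\varepsilon_x = 0$ (i.e. $x = 0$) is settled by $u = 0$, so assume $\varepsilon_x > 0$. Since $\Psi_{\cal S}(x/\varepsilon_x) = 1$, the scaled point $x/\varepsilon_x$ lies on $\partial {\cal S}$, hence can be written as a convex combination of vertices $x/\varepsilon_x = \sum_{n=1}^{N}\mu_n v_n$ with $\mu_n \geq 0$, $\sum_n \mu_n = 1$. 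Substituting $v_n = v_{\ell,n}/\rho_\ell$ gives $x = \alpha \sum_{n=1}^{N}\mu_n v_{\ell,n}$, where $\alpha := \varepsilon_x/\rho_\ell$. The key preliminary observation is that $x \in {\cal S}_\ell \subseteq \rho_\ell {\cal S}$ forces $\varepsilon_x \leq \rho_\ell$, hence $\alpha \in [0,1]$.

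Guided by this representation I would define the candidate input $u := \alpha \sum_{n=1}^{N}\mu_n u^*_{\ell,j}(n)$. Its feasibility $u \in {\cal U}$ follows because $\sum_n \mu_n u^*_{\ell,j}(n) \in {\cal U}$ by convexity of ${\cal U}$, and scaling by $\alpha \in [0,1]$ preserves membership since ${\cal U}$ is a ${\cal C}$-set with $0 \in {\cal U}$. Linearity of the predicted state in the pair $(v_{\ell,n}, u^*_{\ell,j}(n))$ then yields, for every $j'$,
\[
A^{j'} x + \sum_{i=1}^{j'} A^{i-1} B u = \alpha \sum_{n=1}^{N}\mu_n\Big( A^{j'} v_{\ell,n} + \sum_{i=1}^{j'} A^{i-1} B u^*_{\ell,j}(n)\Big).
\]
Claim~(i) is immediate from this identity: each bracketed term lies in ${\cal X}$ by (C.1), their $\mu$-convex combination lies in ${\cal X}$, and multiplication by $\alpha \in [0,1]$ preserves membership because ${\cal X}$ is a ${\cal C}$-set.

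For claim~(ii) I would use the same identity with $j' = j$. By (C.2) each bracketed term lies in $\varepsilon^*_{\ell,j}\rho_\ell {\cal S}$, so their convex combination lies in $\varepsilon^*_{\ell,j}\rho_\ell {\cal S}$; multiplying by $\alpha$ produces a point of $\alpha\,\varepsilon^*_{\ell,j}\rho_\ell {\cal S} = \varepsilon^*_{\ell,j}\varepsilon_x {\cal S}$, using $\alpha \rho_\ell = \varepsilon_x$. This is precisely the target set in (ii).

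The step I expect to be the \emph{crux} --- and the one a naive argument gets wrong --- is obtaining the tight set $\varepsilon^*_{\ell,j}\varepsilon_x {\cal S}$ rather than the coarser $\varepsilon^*_{\ell,j}\rho_\ell {\cal S}$. Writing $x$ directly as a convex combination of the $v_{\ell,n}$ is legitimate only on the outer boundary of ${\cal S}_\ell$ (where $\Psi_{\cal S}(x) = \rho_\ell$) and would certify only the larger set, failing to reflect how deep $x$ sits inside ${\cal S}_\ell$. Introducing the scalar $\alpha = \varepsilon_x/\rho_\ell \le 1$ and scaling both the input and the reachable target set by it is exactly what sharpens the conclusion, and the ${\cal C}$-set hypotheses on ${\cal U}$, ${\cal X}$, and ${\cal S}$ (convexity together with $0$ in the interior) are what make this scaling admissible.
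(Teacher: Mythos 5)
Your proposal is correct and takes essentially the same route as the paper's proof: the identical convex representation $x = (\varepsilon_x/\rho_\ell)\sum_{n=1}^{N}\mu_n v_{\ell,n}$, the identical candidate input $u = (\varepsilon_x/\rho_\ell)\sum_{n=1}^{N}\mu_n u^*_{\ell,j}(n)$, and the same scaling argument $\alpha\,\varepsilon^*_{\ell,j}\rho_\ell {\cal S} = \varepsilon^*_{\ell,j}\varepsilon_x {\cal S}$ that yields the tight set in (ii). Your explicit handling of the degenerate case $\varepsilon_x = 0$ is a minor refinement the paper leaves implicit; otherwise the two arguments coincide.
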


\begin{proof}
Since \rpro{control_problem2} has a solution for $(\ell, j)$, from \req{state_constraint} and \req{stability_constraint} we obtain 
$A^{j'} v_{\ell, n} + \sum^{j'} _{i=1} A^{i-1} B u^* _{\ell, j} (n) \in {\cal X}$ 
for all $n \in \{ 1, \ldots, N \}$, $j' \in \{ 1, \ldots, j \}$, and 
$A^j v_{\ell, n} + \sum^{j} _{i=1} A^{i-1} B u^* _{\ell, j} (n) \in \varepsilon^* _{\ell, j} \rho_\ell {\cal S}$ 
for all $n \in \{1, \ldots, N \}$. 

Suppose $x \in {\cal S}_\ell$ and let $\varepsilon_x = \Psi_{{\cal S}} (x) \in [0, 1]$. Since $x \in {\cal S}_\ell \subseteq \rho_\ell {\cal S}$, we have $\varepsilon_x \leq \rho_\ell$. Moreover, since $x \in \varepsilon_x {\cal S}$, there exist $\lambda_n \in [0, 1]$, $n\in \{1, \ldots, N \}$ such that $x = \varepsilon_x  \sum^{N} _{n=1} \lambda_n v_{n} = (\varepsilon_x/\rho_\ell) \sum^{N} _{n=1} \lambda_n v_{\ell, n}$, $\sum^{N} _{n=1} \lambda_n = 1$, where we have used $v_{\ell, n} = \rho_\ell v_n$. Let $u \in \mathbb{R}^m$ be given by 
\begin{equation}\label{feasible_input}
u = \frac{\varepsilon_x}{\rho_\ell} \sum^{N} _{n=1} \lambda_n u^* _{\ell, j} (n) \in {\cal U}. 
\end{equation}
Then, for all $j' \in \{1, \ldots, j \}$, we obtain 
\begin{equation*}
\begin{aligned}
 & A^{j'} x + \sum^{j'} _{i=1} A^{i-1} B u \\
     &= \frac{\varepsilon_x}{\rho_\ell}  \sum^{N} _{n=1} \lambda_n ( A^{j'} v_{\ell, n} + \sum^{j'} _{i=1} A^{i-1} B  u^* _{\ell, j} (n)) \in \frac{\varepsilon_x}{\rho_\ell}  {\cal X} \subseteq {\cal X},
\end{aligned}
\end{equation*}
where the first inclusion holds since $A^{j'} v_{\ell, n} + \sum^{j'} _{i=1} A^{i-1} B u^* _{\ell, j} (n) \in {\cal X}$ for all $n \in \{ 1, \ldots, N \}$, $j' \in \{ 1, \ldots, j \}$, and the last inclusion holds since $\varepsilon_x \leq \rho_\ell$. 
Moreover, we have
\begin{equation*}
\begin{aligned}
 & A^j x + \sum^j _{i=1} A^{i-1} B u \\
     &= \frac{\varepsilon_x}{\rho_\ell}  \sum^{N} _{n=1} \lambda_n ( A^j v_{\ell, n} + \sum^j _{i=1} A^{i-1} B  u^* _{\ell, j} (n)) \\
     & \in \frac{\varepsilon_x}{\rho_\ell}\ \rho_\ell \ \varepsilon^* _{\ell, j} {\cal S} =\varepsilon^* _{\ell, j} \varepsilon_x {\cal S}, 
\end{aligned}
\end{equation*}
where the inclusion holds since $A^j v_{\ell, n} + \sum^{j} _{i=1} A^{i-1} B u^* _{\ell, j} (n) \in \rho_\ell \varepsilon^* _{\ell, j}  {\cal S}$ for all $n \in \{1, \ldots, N \}$. Hence, we obtain $A^j x + \sum^{j} _{i=1} A^{i-1} B u \in \varepsilon^* _{\ell, j} \varepsilon_x \ {\cal S}$. 
This completes the proof. 
\end{proof}
\rlem{stabilize_controller} implies that for every $x \in {\cal S}_\ell$ there exists $u\in{\cal U}$ such that $\Psi_{\cal S} (A^j x + \sum^j _{i=1} A^{i-1} B u) \leq \varepsilon^* _{\ell, j} \Psi_{\cal S} (x)$ holds. 
Thus, this means that if $x \in {\cal S}_\ell$, then there exists a $j$-step stabilizing controller such that the output of the gauge function decreases. 
As mentioned previously in \rsec{stabilize_control_sec}, $(1-\varepsilon^* _{\ell, j})$ represents the decreasing rate of the gauge function. Thus, we can evaluate the control performance by $\varepsilon^* _{\ell, j}$ similarly to the self-triggered strategy presented in the previous section. 

Now, suppose that for each $\ell$ we solve \rpro{control_problem2} for all $j \in \{1, \ldots, j_{\max} \}$. 
Let ${\cal J}_{\ell}$ be a set of indices (transmission time intervals) where \rpro{control_problem2} has a feasible solution for $\ell$, i.e., 
\begin{equation}\label{calJ2}
\begin{aligned}
{\cal J}_{\ell} = \{j \in  & \{1,  \ldots, \  j_{\max}\} : \\
                       & {\rm \rpro{control_problem2}\ is\ feasible\ for\ } (\ell, \ j) \}. 
\end{aligned}
\end{equation}
Regarding the feasible set ${\cal J}_\ell$, we obtain the following:
\begin{mylem}\label{feasible_lem2}
${\cal J}_{\ell}$ is non-empty for all $\ell \in \{ 1, \ldots, L \}$. 
\end{mylem}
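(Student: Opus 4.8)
The plan is to prove non-emptiness by exhibiting one feasible index, namely the shortest interval $j=1$, for every $\ell \in \{1, \ldots, L\}$; this gives $1 \in {\cal J}_\ell$ and hence the claim. The strategy mirrors the feasibility argument behind \rrem{feasibility_problem}: I would exploit the $\lambda$-contractivity of ${\cal S}$ at each vertex $v_n$ and transfer it to the scaled extreme points $v_{\ell, n} = \rho_\ell v_n$ by homogeneity of the linear dynamics.

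First I would invoke \rdef{lambda_contractive} for the set ${\cal S}$: since ${\cal S}$ is $\lambda$-contractive, each vertex $v_n \in {\cal S}$ admits a control $u^0_n \in {\cal U}$ with $A v_n + B u^0_n \in \lambda {\cal S}$. Next, for the pair $(\ell, 1)$, I would propose the candidate inputs $u_n = \rho_\ell u^0_n$. Because ${\cal U}$ is a ${\cal C}$-set we have $0 \in {\rm int}\{{\cal U}\}$, and convexity of ${\cal U}$ together with $\rho_\ell \in (0,1]$ gives $\rho_\ell u^0_n = \rho_\ell u^0_n + (1-\rho_\ell)\cdot 0 \in {\cal U}$, so each candidate input is admissible.

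The verification of the two constraints then follows from linearity. Using $v_{\ell, n} = \rho_\ell v_n$, one computes $A v_{\ell, n} + B u_n = \rho_\ell (A v_n + B u^0_n) \in \lambda \rho_\ell {\cal S}$. Choosing $\varepsilon = \lambda \in [0, \lambda]$, this inclusion is exactly (C.2), i.e.\ \req{stability_constraint} for $j=1$. For (C.1), since $\lambda \rho_\ell \leq 1$ and ${\cal S}$ is a ${\cal C}$-set we have $\lambda \rho_\ell {\cal S} \subseteq {\cal S} \subseteq {\cal X}$, so \req{state_constraint} holds for the only required index $j'=1$. Hence $(u_1, \ldots, u_N, \varepsilon) = (\rho_\ell u^0_1, \ldots, \rho_\ell u^0_N, \lambda)$ is feasible for $(\ell, 1)$, giving $1 \in {\cal J}_\ell$ and therefore ${\cal J}_\ell \neq \emptyset$.

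The only delicate point, and the step I would treat as the crux, is this scaling argument: that the $\lambda$-contractivity established at the vertices $v_n$ transfers to the scaled points $v_{\ell, n} = \rho_\ell v_n$. This rests on two structural facts, namely the homogeneity of the map $x \mapsto Ax + Bu$ under simultaneous scaling of state and input, and the admissibility $\rho_\ell u^0_n \in {\cal U}$, which in turn uses $0 \in {\rm int}\{{\cal U}\}$ and convexity of ${\cal U}$. Everything else is a direct substitution into \req{state_constraint}--\req{stability_constraint}, so once the scaling is justified the result is immediate.
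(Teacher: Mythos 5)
Your proof is correct and takes essentially the same route as the paper's own: both establish non-emptiness by exhibiting a feasible solution of \rpro{control_problem2} for the pair $(\ell,1)$ with $\varepsilon=\lambda$, using $\lambda$-contractivity of ${\cal S}$ at the scaled extreme points $v_{\ell,n}=\rho_\ell v_n$. The only difference is cosmetic: the paper invokes the existence of controls with $Av_{\ell,n}+B\tilde{u}_{\ell,n}\in\lambda\rho_\ell{\cal S}$ as a known ``property of the $\lambda$-contractive set,'' whereas you derive it explicitly via the homogeneity argument $u_n=\rho_\ell u^0_n\in{\cal U}$ (valid since $0\in{\cal U}$ and ${\cal U}$ is convex), which is exactly the standard fact the paper leaves implicit.
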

The proof immediately follows from \rdef{lambda_contractive} and is given in the Appendix. By evaluating the feasible solutions obtained above, we now assign to each ${\cal S}_\ell$ a suitable transmission time interval. In Algorithm~1, we presented the self-triggered strategy by determining the transmission interval according to the reward function in \req{cost_func}. Motivated by this, we similarly now consider the following assignment of the transmission time interval to ${\cal S}_\ell$, by taking both control performance and communication load into account: 
\begin{equation}\label{cost_func2}
j^* _{\ell} = \underset{j \in {\cal J}_{\ell} }{\rm argmax}\ w_1  (1-\varepsilon^* _{\ell, j})/j + w_2  j, 
\end{equation}
where $w_1, w_2 \geq 0$ denote the given tuning weights associated to each part of the reward similarly to \req{cost_func}. Note that in contrast to the previous self-triggered strategy where a suitable transmission time interval is obtained online, \req{cost_func2} is now given in an offline fashion. 
Suppose that we compute $j^* _\ell$ according to \req{cost_func2} for all $\ell \in \{1, \ldots, L \}$. Then, each $j^* _{\ell}$ is assigned to ${\cal S}_{\ell}$ as the suitable transmission time interval. That is, if $x(k_m) \in {\cal S}_{\ell}$ for a certain transmission time $k_m$,  the controller directly sets the next transmission time as $k_{m+1} = k_m + j^* _{\ell}$. Let ${\cal T} : {\cal S} \rightarrow \{1, \ldots, j_{\max} \}$ be a mapping from ${\cal S}_\ell$ to the assigned transmission time interval, i.e., $ j^* _\ell = {\cal T} ({\cal S}_\ell )$. Moreover, let ${\cal R}: {\cal X} \rightarrow {\cal S}$ be a mapping from $x$ to the corresponding subset that $x$ belongs to, i.e., 
${\cal R} (x) =  {\cal S}_\ell,  \ \  {\rm iff} \ x\in {\cal S}_\ell,\ \ell \in \{1, \ldots, L\}. $
Then, the overall transmission mapping $\Gamma: {\cal X} \rightarrow \{1, \ldots, j_{\max} \}$ is given by 
\begin{equation}\label{gamma}
\Gamma (x) =  ({\cal T} \circ {\cal R}) (x).
\end{equation} 

\subsection{An overall algorithm}
Given the explicit transmission mapping obtained in \req{gamma}, the overall self-triggered algorithm is now provided below: \\ 


\textit{Algorithm~2 (Self-triggered strategy via explicit mapping $\Gamma$)}: 
Given the explicit mapping $\Gamma$ obtained by \req{gamma} and for any transmission time $k_m$, $m \in \mathbb{N}$, do the following: 
\begin{enumerate}
\item The plant transmits the current state information $x(k_m)$ to the controller. 
\item Based on $x(k_m)$, the controller sets the transmission time interval as $j_m =\Gamma (x(k_m))$. Then, set the next trasmission time as $k_{m+1} = k_m + j_m$. 
\item Suppose that $x(k_m) \in {\cal S}_{\ell_m}$ for some $\ell_m \in \{1, \ldots, L \}$. For a given $j_m$ obtained in step 2), the controller sets $u^* (k_m) = (\varepsilon_x / \rho_{\ell_m}) \sum^{N} _{n=1} \lambda_n u^* _{\ell_m, j_m} (n) \in {\cal U}$, where $\varepsilon_x = \Psi_{\cal S}(x(k_m))$ and $u^* _{\ell_m, j_m} (1), \ldots, u^* _{\ell_m, j_m} (N) \in {\cal U}$ are the solution to \rpro{control_problem2} for $(\ell_m, j_m)$. Then, the controller transmits $u^* (k_m)$ and $k_{m+1}$ to the plant. 
\item The plant applies $u^* (k_m)$ for all $k \in [k_m, k_{m+1})$. Set $m \leftarrow m+1$ and then go back to step 1). \qedwhite
\end{enumerate}

As shown in Algorithm~2, in contrast to the first approach the controller only needs to compute the control input for a given transmission time interval from the explicit mapping $\Gamma$. 
\begin{myrem}[The point location problem]
\normalfont 
For each transmission time $k_m$, the controller needs to find a suitable subset ${\cal S}_\ell$ such that $x(k_m)\in {\cal S}_\ell$ holds to determine the assigned transmission time interval according to \req{gamma}. This problem, which we call \textit{the point location problem}, can be easily solved by using the following property: we have $x(k_m) \in {\cal S}_1 \Leftrightarrow x(k_m) \in \rho_1 {\cal S}$, and for all $\ell \in \{2, \ldots, L\}$,
\begin{equation}
x(k_m) \in {\cal S}_\ell\ \Leftrightarrow\ x(k_m) \notin \rho_{\ell-1} {\cal S},\ x(k_m) \in \rho_{\ell} {\cal S}. 
\end{equation}
Hence, the point location problem can be solved by checking if $x(k_m) \in \rho_\ell {\cal S}$, $\ell \in \{1, \ldots, L\}$ sequentially in that order, and takes the first index $\ell$ such that $x (k_m) \in \rho_\ell {\cal S}$ holds. 
\qedwhite 
\end{myrem}
\begin{mythm}[Stability]
Suppose that Assumptions~3 holds, and Algorithm~2 is implemented. Then, it holds that $x (k) \rightarrow 0$ as $k \rightarrow \infty$. \qedwhite 
\end{mythm}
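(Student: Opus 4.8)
The plan is to mirror the proof of \rthm{stability1}, replacing the online feasibility argument with the offline guarantees provided by \rlem{feasible_lem2} and \rlem{stabilize_controller}. The argument proceeds in three stages: (i) establishing that the closed-loop state remains inside ${\cal S}$ at every transmission instant so that $\Gamma$ is always well-defined, (ii) showing that $\Psi_{\cal S}(\cdot)$ strictly decreases along the transmission instants, and (iii) upgrading this to convergence of $x(k)$ for all $k$.

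First I would prove by induction that $x(k_m) \in {\cal S}$ for every $m \in \mathbb{N}$. The base case $x(k_0) \in {\cal S}$ is \ras{initial_cond}. For the inductive step, assume $x(k_m) \in {\cal S}$. By the decomposition \req{decomposition_subsets} there is a unique $\ell_m$ with $x(k_m) \in {\cal S}_{\ell_m}$, and by \rlem{feasible_lem2} the set ${\cal J}_{\ell_m}$ is non-empty, so $j_m = \Gamma(x(k_m)) = j^*_{\ell_m}$ is well-defined through \req{cost_func2} and corresponds to a feasible solution of \rpro{control_problem2}. The crux is then to invoke \rlem{stabilize_controller}: the input $u^*(k_m)$ set in step 3 of Algorithm~2 is exactly the feasible input \req{feasible_input}, so properties (i) and (ii) of that lemma hold for $x(k_m)$. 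Property (ii) gives $x(k_{m+1}) \in \varepsilon^*_{\ell_m, j_m} \varepsilon_x {\cal S}$ with $\varepsilon_x = \Psi_{\cal S}(x(k_m))$, and since $\varepsilon^*_{\ell_m, j_m} \leq \lambda < 1$ and $\varepsilon_x \leq 1$ we get $x(k_{m+1}) \in {\cal S}$, closing the induction.

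Second, property (ii) of \rlem{stabilize_controller} directly yields $\Psi_{\cal S}(x(k_{m+1})) \leq \varepsilon^*_{\ell_m, j_m} \Psi_{\cal S}(x(k_m)) \leq \lambda\, \Psi_{\cal S}(x(k_m))$ with $\lambda < 1$, exactly as in \req{gauge_decrease}. Regarding $\Psi_{\cal S}(\cdot)$ as the set-induced Lyapunov function of \rlem{stability_lem}, this shows $\Psi_{\cal S}(x(k_m)) \to 0$, hence $x(k_m) \to 0$, along the transmission instants. To extend this to $x(k) \to 0$ for all $k$, I would observe that property (i) ensures $x(k) \in {\cal X}$ on each interval $[k_m, k_{m+1})$, that the applied input $u^*(k_m) = (\varepsilon_x/\rho_{\ell_m}) \sum_{n=1}^N \lambda_n u^*_{\ell_m, j_m}(n)$ scales linearly with $\varepsilon_x \to 0$, and that the inter-transmission gap is bounded by $j_{\max}$; since $x(k)$ depends continuously on the vanishing quantities $x(k_m)$ and $u^*(k_m)$ over at most $j_{\max}$ steps, the intermediate states also converge to the origin.

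The main obstacle I expect is the bookkeeping that links the offline, vertex-based feasibility of \rpro{control_problem2} to the actual closed-loop state. In particular one must verify that the scaling factor $\varepsilon_x/\rho_{\ell_m}$ lies in $[0,1]$ — which follows from $x(k_m) \in {\cal S}_{\ell_m} \subseteq \rho_{\ell_m}{\cal S}$, giving $\varepsilon_x \leq \rho_{\ell_m}$ — so that $u^*(k_m) \in {\cal U}$ by the ${\cal C}$-set property (convexity together with $0 \in {\cal U}$), and that the input used in Algorithm~2 coincides with the one in \rlem{stabilize_controller}. Once this identification is secured, the stability argument is essentially identical to that of \rthm{stability1}; the genuinely new content has already been absorbed into \rlem{stabilize_controller} and \rlem{feasible_lem2}.
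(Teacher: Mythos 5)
Your proposal is correct and follows essentially the same route as the paper's proof: induction on $x(k_m)\in{\cal S}$ via \rlem{feasible_lem2} and the explicit input from the proof of \rlem{stabilize_controller}, followed by the gauge-function decrease $\Psi_{\cal S}(x(k_{m+1}))\leq\lambda\,\Psi_{\cal S}(x(k_m))$ and the set-induced Lyapunov conclusion. Your stage (iii), handling the inter-sample states via the bounded gap $j_{\max}$ and the input scaling with $\varepsilon_x$, is a detail the paper leaves implicit, but it is a refinement of the same argument rather than a different approach.
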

\begin{proof}
We first show that selecting $j_m$ as $j_m = \Gamma (x (k_m) )$ is always feasible by proving that $x(k_m) \in {\cal S}$ for all $m \in \mathbb{N}$ (if $x(k_m) \notin {\cal S}$, the controller cannot determine $j_m$ since the mapping is not defined). By \ras{initial_cond}, we obtain $x(k_0) \in {\cal S}$. To prove by induction, assume $x(k_m) \in {\cal S}$ for some $m\in \mathbb{N}_+$, and we will show $x(k_{m+1}) \in {\cal S}$. Suppose that $x(k_m) \in {\cal S}_{\ell_m} \subseteq {\cal S}$ for some $\ell_m \in \{ 1,\ldots, L\}$, which means from \req{cost_func2} that $j_m = j^* _\ell = \Gamma (x (k_m))$ with $\ell = \ell_m$. Since $j_m = j^* _\ell \in {\cal J}_\ell$ with $\ell = \ell_m$, \rpro{control_problem2} has a solution for the pair $(\ell_m, j_m)$. Let $\varepsilon^* _{\ell_m, j_m} \in [0, \lambda]$ be the optimal $\varepsilon$ as a solution to \rpro{control_problem2} for $(\ell_m,  j_m)$. Then, from the proof of \rlem{stabilize_controller}, setting $u^*(k_m) = (\varepsilon_x /\rho_{\ell_m}) \sum^{N} _{n=1} \lambda_n u^* _{\ell_m, j_m} (n)$ yields that $x(k_{m+1}) = A^{j_m} x(k_m) + \sum^{j_m} _{i=1} A^{i-1} B u^* (k_m) \in \varepsilon_x \varepsilon^* _{\ell_m, j_m} {\cal S} \subseteq {\cal S}$. Therefore, we have $x(k_m) \in {\cal S}$ for all $m\in \mathbb{N}$. 


Now, it is shown that $x (k) \rightarrow 0$ as $k \rightarrow \infty$. Since $x(k_m) \in {\cal S}$ for all $m\in \mathbb{N}$, we obtain from \rlem{stabilize_controller} that 
\begin{equation}
\begin{aligned}
\Psi_{\cal S} (x(k_{m+1}) ) & \leq \varepsilon^* _{\ell_m, j_m} \Psi_{\cal S} (x(k_m))  \\
& \leq \lambda \ \Psi_{\cal S} (x(k_m)), 
\end{aligned}
\end{equation}
with $\lambda < 1$. Therefore, by considering $\Psi_{\cal S}(\cdot)$ as a set-induced Lyapunov function candidate, the state trajectory is asymptotically stabilized to the origin. This completes the proof.
\end{proof}

\subsection{Comparisons between first and second approach} \label{discuss_alg2_sec}
In this subsection we discuss both advantages and drawbacks of the second approach (Algorithm~2), by making some comparisons with the first one (Algorithm~1). As stated previously, the second approach is advantageous over the first one in terms of the computation load, since the transmission mapping $\Gamma$ is given offline according to the procedure presented in the previous subsection. Note, however, that in the second approach, each $j^* _\ell$ is computed by solving \rpro{control_problem2} that evaluates \textit{the extreme points} of ${\cal S}_\ell$ (i.e., $v_{\ell, n}$, $n \in \{1, \ldots, N\}$). This means that, while $x(k_m)$ is in the \textit{interior} of ${\cal S}_\ell$, which is not on some extreme point of ${\cal S}_\ell$, there may exist some $j \in {\cal J}_\ell \ (j\neq j^* _\ell)$, such that applying $u^* (k_m) = (\varepsilon_x/\rho_\ell)  \sum^{N} _{n=1} \lambda_n u^* _{\ell, j} (n) \in {\cal U}$ {could} yield a \textit{larger} reward in \req{cost_func2} than the one obtained with $\varepsilon^* _{\ell, j^* _\ell}$. In this sense, the second approach yields a suboptimal (or conservative) solution compared with Algorithm~1 on the selection of transmission time intervals. This observation is also illustrated in the simulation example, where it is shown that Algorithm~1 achieves less communication load than Algorithm~2 for the case $w_1 = 0$ (for details, see Section~V). 

\section{Simulation results}
In this section we provide an illustrative example to validate our control schemes. The simulation was conducted on Matlab 2016a under Windows 10, Intel(R) Core(TM) 2.40 GHz, 8 GB RAM, using Multi-Parametric Toolbox (MPT3) to compute the $\lambda$-contractive set. We consider a control problem of a \textit{batch reactor system}, which is often utilized as a benchmark in the NCSs community (see, e.g., \cite{heemels2012a}). The linearized model is given in the continuous-time domain as $\dot{x} (t) = A_c x(t) + B_c u(t)$, where $A_c, B_c$ are given by 
\begin{equation*}
{\small 
\begin{aligned}
A_c & =  \left [
\begin{array}{cccc}
1.380  &  -0.208 & 6.715   & -5.676  \\
-0.581 & -4.290  & 0        & 0.675 \\
1.067   & 4.273   & -6.654  & 5.893 \\
0.048   & 4.273   & 1.343  & -2.104 \\
\end{array}
\right ] \\ 
B_c & = \left [
\begin{array}{cc}
0       &    0 \\
5.679  &    0  \\
1.136   & -3.146 \\
1.136   & 0
\end{array}
\right ]. 
\end{aligned}
}
\end{equation*}
The system is unstable having unstable poles $1.9911, 0.0633$. We assume ${\cal X} = \{ x\in \mathbb{R}^4 : ||x||_\infty \leq 2 \}$, ${\cal U} = \{ u \in \mathbb{R}^2 : ||u||_{\infty} \leq 5 \}$ and $j_{\max} = 30$. We obtain the corresponding discrete-time system under a zero-order-hold controller with a sampling time interval $0.1$, and the $\lambda$-contractive set ${\cal S}$ is obtained with $\lambda=0.99$ according to the procedure presented in Section II. \rfig{state_trajectory} illustrates the resulting state trajectories and the corresponding control inputs by implementing Algorithm~1, starting from the initial state $x(k_0) = [1;\ 2;\ 2;\ 0.5]$ and the weights $(w_1, w_2) = (50, 1)$. The figure shows that the resulting state trajectories are asymptotically stabilized to the origin, and control inputs are updated only when necessary. 

To analyse the effect of weights, we again simulate Algorithm~1 with $x(k_0) = [1;\ 2;\ 2;\ 0.5]$ under different selection of weights $(w_1, w_2) =(0, 1), (50,1), (100, 1)$. We then compute the convergence time steps when the state enters the small region around the origin (the region satisfying $||x|| \leq 0.001$), and the total number of transmission instances during the time period $k \in [0, 100]$. The results are shown in \rtab{result_alg1}. From the table, $(w_1, w_2) = (100, 1)$ achieves the fastest speed of  convergence. This is due to the fact that by selecting $w_1$ larger, the reward for the control performance (i.e., the first term in \req{cost_func}) is emphasized to be obtained. On the other hand, the number of transmission instances is the smallest for the case $(w_1, w_2) =(0, 1)$, which means that the smallest communication load is obtained. Therefore, it is shown that there exists a trade-off between control performance and communication load, and such trade-off can be regulated by tuning the weights $(w_1, w_2)$. 

\begin{table}[b]
\begin{center}
\caption{Convergence time and number of transmission instances} \label{result_alg1}
\begin{tabular}{cccc} \hline 
 ($w_1, w_2$) & (0,1) & $(50,1)$ & $(100, 1)$  \\ \hline \hline
Convergence (steps) & 141 &  93     &  69  \\ \hline
Transmission instances & 5 &  6    &  10 \\ \hline \hline
\end{tabular}
\end{center}
\end{table}

\begin{figure}[tbp]
  \begin{center}
   \includegraphics[width=7.8cm]{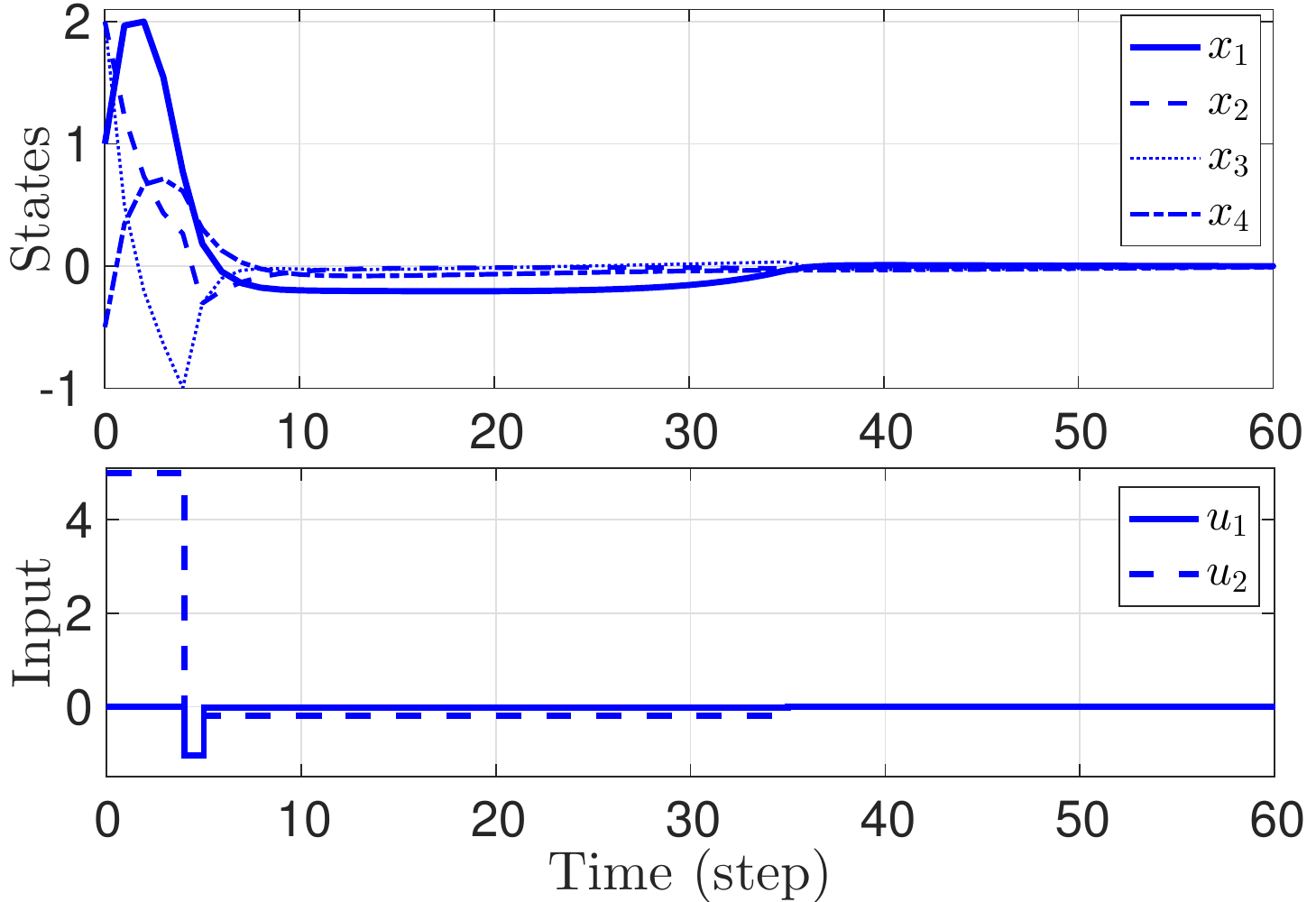}
   \caption{Simulation results of state trajectories (upper) and the control inputs (lower) by applying Algorithm~1.}
   \label{state_trajectory}
  \end{center}
 \end{figure}
 
 \begin{figure}[tbp]
  \begin{center}
   \includegraphics[width=7.8cm]{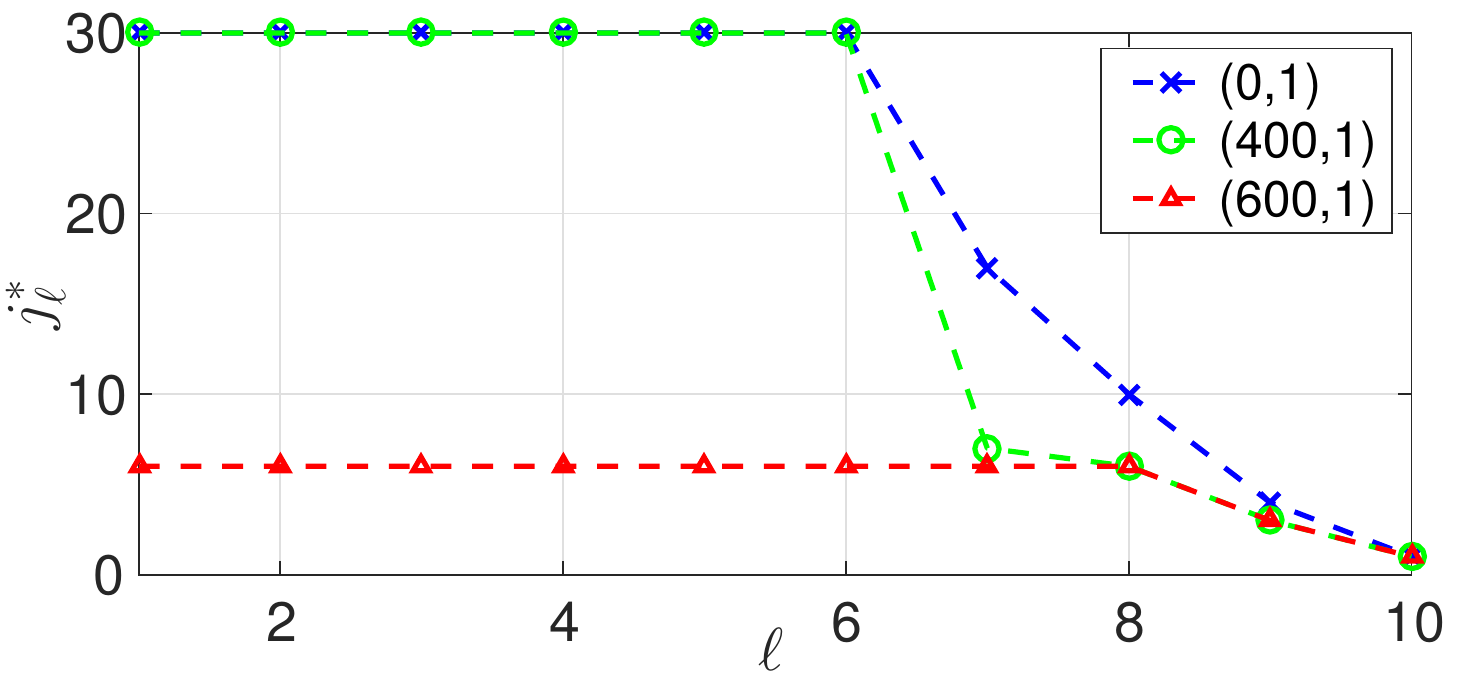}
   \caption{$j^* _\ell$ as a function of $\ell$ with $(w_1, w_2) = (0, 1), (400, 1), (600,1)$. }
   \label{jmax}
  \end{center}
 \end{figure}

To implement the second proposal, we decompose ${\cal S}$ into $L=10$ subsets with $\rho_\ell = 0.1 \ell$, $ \ell \in \{1, \ldots, 10 \}$ and the mapping $\Gamma$ is constructed according to Section IV. The selected transmission time intervals $j^* _\ell$ as a function of $\ell$ are illustrated in \rfig{jmax} under different selections of the weights $(w_1, w_2) = (0, 1), (400, 1), (600,1)$. The figure shows that the transmission time interval tends to be smaller as the weight $w_1$ increases. This means that attaining control performance is emphasized more than attaining communication reduction, if the weight for achieving the control performance $w_1$ is selected larger. In \rfig{jmax}, we also illustrate the selected transmission time intervals for $(w_1, w_2) = ( 0 , 1 )$ (i.e., the blue dashed line). Since $w_1 = 0$, each $j^* _\ell$ corresponds to the \textit{largest} transmission time interval such that \rpro{control_problem2} becomes feasible for $\ell$ (i.e., the maximal index in the feasible set ${\cal J}_\ell$). 
As shown in the figure, the feasible transmission time interval gets smaller as $\ell$ increases. Intuitively, this is due to that for unstable systems, applying a constant control input leads to a divergence of states, especially if the extreme points to solve \rpro{control_problem2} are located far from the origin. 


To illustrate the calculation time and the optimality of Algorithm~1 and 2 as discussed in \rsec{discuss_alg2_sec}, we again simulate the two algorithms with $(w_1, w_2) = (0, 1)$ and the initial state $x(k_0) = [1;\ 2;\ 2;\ 0.5]$. As previously described, setting $w_1 = 0$ corresponds to selecting the largest index in ${\cal J}_\ell$. 
\rtab{result_tab} illustrates the total number of transmission instances and the average calculation time to compute the control input for each transmission instance (i.e., the calculation time from step 2) to step 3) in Algorithms~1, 2). Here, both the total number of transmission instances and the average calculation time are computed over the time period $k \in [0, 100 ]$.  
From the table, Algorithm~1 achieves less communication load than Algorithm~2. As already discussed in \rsec{discuss_alg2_sec}, this is because of the sub-optimality of Algorithm~2; while Algorithm~1 solves \rpro{control_problem} based on the current state information online, \rpro{control_problem2} is solved offline by evaluating the extreme points of the subsets. On the other hand, Algorithm~2 achieves less calculation time than Algorithm~1, as the transmission mapping $\Gamma$ is explicitly given offline. 

\begin{table}[tbp]
\begin{center}
\caption{Number of transmission instances and average calculation time with $(w_1, w_2) = (0, 1)$. } \label{result_tab}
\begin{tabular}{ccc} \hline 
                             & Algorithm 1    &  Algorithm 2  \\ \hline \hline
Transmission instances &  5     &  7  \\ \hline
Calculation time (sec)      &  2.98     &  0.83 \\ \hline\hline
\end{tabular}
\end{center}
\end{table}

\section{Conclusion}
In this note, we present two different types of self-triggered strategies based on the notion of set-invariance theory. In the first approach, we formulate an optimal control problem such that suitable transmission time intervals are selected by evaluating both the control performance and the communication load. 
The second approach aims to overcome the computation drawback of the first one by providing an offline characterization of the mapping $\Gamma$. 
 In this approach, the state space is decomposed into a finite number of subsets, to which suitable transmission time intervals are assigned. Finally, the proposed self-triggered strategies are illustrated through a numerical example of controlling a batch reactor system. 

{\small 

}

\appendix
\textit{(Proof of \rlem{feasible_lem2}) : } 
We show that ${\cal J}_\ell$ is non-empty for all $\ell \in \{1, \ldots, L \}$, by proving that \rpro{control_problem2} is feasible for $(\ell, 1)$. Since $v_{\ell, 1}, \cdots, v_{\ell, N} \in \rho_\ell {\cal S}$, it holds that there exist a set of controllers $\tilde{u}_{\ell, 1}, \cdots, \tilde{u}_{\ell, N}\in {\cal U}$, such that $A v_{\ell, n} + B \tilde{u}_{\ell, n} \in \lambda \rho_\ell {\cal S} \subseteq {\cal X}$, $\forall n \in \{1, \ldots, N \}$ from the properties of the $\lambda$-contractive set. Thus, this directly means from \req{state_constraint}, \req{stability_constraint} that 
\rpro{control_problem2} has a feasible solution for $(\ell, 1)$, with $\varepsilon = \lambda$ and $u_{\ell, n} = \tilde{u}_{\ell, n}$, $\forall n \in \{1, \ldots, N \}$. This completes the proof. \qedwhite

\end{document}